\def\Ker{\mathrm{Ker}\,}
\def\diag{\mathrm{diag\,}}
\def\A{\mathcal A}
\def\csr{\mathrm{csr}}
\def\U{\mathcal U}
\def\M{\mathrm M}
\def\D{\mathrm D}
\def\Z{\mathbb Z}
\def\C{\mathbb C}
\def\B{\mathcal B}
\def\A{\mathcal A}
\def\E{\mathcal E}
\def\rank{\mathrm{rank\,}}
\def\Times{\!\times}
\def\dcup{\mathop{\displaystyle\cup}\limits}
\def\doplus{\mathop{\displaystyle\bigoplus}\limits}
\newtheorem{lemma}{Lemma}[section]
\newtheorem{theorem}[lemma]{Theorem}
\newtheorem{corollary}[lemma]{Corollary}
\newtheorem{example}[lemma]{Example}
\begin{document}

\pagestyle{myheadings} \markboth{\rm Yifeng Xue}{\rm Classification of $C(M)\times_\theta\Z_p$
for certain pairs $\mathbf{(M,\theta)}$}

\vspace*{2cm}

\centerline{\Large\bf Classification of the crossed product $\mathbf{C(M)\times_\theta\Z_p}$}

\smallskip

\centerline{\Large\bf for certain pairs $\mathbf{(M,\theta)}$}

\bigskip

\centerline{\bf Yifeng Xue}

\thispagestyle{empty} 

\footnote[0]{2000 {\it Mathematics Subject Classifications}. 46L05}
\footnote[0]{{\it Key words and Phrases}. matrix bundle, Crossed product, Busby invariant}
\footnote[0]{Project supported by Natural Science Foundation of China (no.10771069) and Shanghai\par
\ \ Leading Academic Discipline Project(no.B407)}

\begin{abstract}
Let $M$ be a separable compact Hausdorff space with $\dim M\le 2$ and $\theta\colon M\rightarrow M$ be a homeomorphism with
prime period $p$ ($p\ge 2$). Set $M_\theta=\{x\in M\vert\,\theta(x)=x\}\not=\varnothing$ and $M_0=M\backslash M_\theta$.
Suppose that $M_0$ is dense in $M$ and $\mathrm H^2(M_0/\theta,\Z)\cong 0$,
$\mathrm H^2(\chi(M_0/\theta),\Z)\cong 0$. Let $M'$ be another separable compact Hausdorff
space with $\dim M'\le 2$ and $\theta'$ be the self--homeomorphism of $M'$ with prime period $p$. Suppose that
$M_0'=M'\backslash M_{\theta'}'$ is dense in $M'$. Then $C(M)\times_\theta\Z_p\cong
C(M')\times_{\theta'}\Z_p$ iff there is a homeomorphism $F$ from $M/\theta$ onto $M'/\theta'$
such that $F(M_\theta)=M'_{\theta'}$. Thus, if $(M,\theta)$ and $(M',\theta')$ are orbit
equivalent, then $C(M)\times_\theta\Z_p\cong C(M')\times_{\theta'}\Z_p$.
\end{abstract}

\section{Introduction}

\quad\ The classification of dynamical systems and corresponding $C^*$--crossed products have became one of most
important research subjects for more than a decade. One of the most important results about the classification
of the minimal Cantor dynamical system was obtained by T. Giordano, I.F. Putnam and F. Skau in \cite{GPS}.
They proved that two minimal Cantor crossed products $C(X_1)\times_{\alpha_1}\Z$ and $C(X_2)\times_{\alpha_2}\Z$
are $*$--isomorphic iff $(X_1,\alpha_1)$ and $(X_2,\alpha_2)$ are strong orbit equivalent
(cf. \cite[Theorem 2.1]{GPS}). Recently, H. Lin and H. Matui used their notation so--called approximate
conjugacy of dynamical systems to obtain many equivalent conditions that make
$C(X_1)\times_{\alpha_1}\Z\cong C(X_2)\times_{\alpha_2}\Z$ in \cite{LM}.

For the type of dynamical systems such as $(C(X),\alpha,\Z_n)$ where $X$ is a compact Hausdorff
space and $\alpha\colon X\rightarrow X$ is homeomorphism, there is little known when
$C(X_1)\times_{\alpha_1}\Z_n\cong C(X_2)\times_{\alpha_2}\Z_n$. We only know if $\alpha_1$
acts on $X_1$ freely and $\text{H}^2(X_1/\alpha_1,\Z)$ has no element annihilated by $n$, then
$C(X_1)\times_{\alpha_1}\Z\cong C(X_2)\times_{\alpha_2}\Z$ iff $\alpha_2$ acts on $X_2$ freely and $X_1/\alpha_1\cong
X_2/\alpha_2$ (cf. \cite[Proposition 5]{LL}).

But there is a special example showed by Elliott in \cite{El} which could enlighten one on
the classification of $C(X)\times_\alpha\Z_n$ when $\alpha$ has fixed points. The example is:
$
C(\mathbf S^1)\times_\alpha\Z_2\cong\{f\colon [0,1]\rightarrow\M_2(\C)\ \text{continuous}\,
\vert\ f(0),\, f(1)\ \text{are diagonal}\},
$
where $\alpha(z)=\overline{z}$, $\forall\,z\in\mathbf S^1$ (cf. \cite[6.10.4, 10.11.5]{Bl}).

Inspired by this example, we try to find when the crossed product $C(X)\times_\alpha\Z_p$ has
the similar form and try to classify it. So, in this paper, we will consider the classification
of the crossed product $C(M)\times_\theta\mathbb Z_p$, here $M$ is a separable compact Hausdorff
space and $\theta$ is a self--homeomorphism of $M$ with prime period $p$ ($p\ge 2$) such that
$M_\theta=\{x\in M\vert\,\theta(x)=x\}\not=\varnothing$. We let $(M,\theta)$ denote the pair
which satisfy conditions mentioned above throughout the paper.

Based on the Extension theory of $C^*$--algebras and author's previous work on
$C(M)\times_\theta\Z_p$, we obtain that if $\dim M\le 2$, $M_0=M\backslash M_\theta$ is dense in
$M$ and $\mathrm H^2(M_0/\theta,\Z)\cong 0$, $\mathrm H^2(\chi(M_0/\theta),\Z)\cong 0$, then
$$
C(M)\times_\theta\Z_p\cong\{(a_{ij})_{p\times p}\in\M_p(C(M/\theta))\vert\,a_{ij}(x)=0,\
\forall\,x\in M_\theta,\ i\not=j\}.
$$
where $M/\theta$ (or $M_0/\theta$) is the orbit space of $\theta$ and $\chi(M_0/\theta)$
is the corona set of $M_0/\theta$. Moreover, let $(M',\theta')$ be another pair with
$\dim M'\le 2$ and $\overline{M\backslash M'_{\theta'}}=M'$. Then
$$
C(M)\times_\theta\Z_p\cong C(M')\times_{\theta'}\Z_p
$$
iff there is a homeomorphism $F$ from $M/\theta$ onto $M'/\theta'$ such that
$F(M_\theta)=M'_{\theta'}$.

\section{Preliminaries}

\quad\ Let $\A$ be a $C^*$--algebra with unit $1$. We denote by $\U(\A)$
the group of unitary elements of $\A$ and $\U_0(\A)$ the connected component of
the unit $1$ in $\U(\A)$. Let $\M_m(\A)$ be the matrix algebra of order $m$ over $\A$. Set (cf. \cite{R1,R2} or \cite{X})
\begin{align*}
\text{S}_n(\A)=&\{\,(a_1,\cdots,a_n)\vert\,\sum\limits^n_{k=1}a^*_ka_k=1\}\\
\csr(\A)=&\min\{\,n\vert\ \U_0(\M_n(\A))\ \text{acts transitively on}\ \text{S}_m(\A)\ \forall\, m\ge n\,\}
\end{align*}
\begin{lemma}\label{Lx}
Let $X$ be a compact Hausdorff space with covering dimension $\dim X\le 2$. Given $U\in\U(\M_p(C(X)))$,
there are $U_0\in\U_0(\M_p(C(X)))$ and $v\in\U((C(X)))$ such that $U=U_0\,\diag(1_{p-1},v)$.
\end{lemma}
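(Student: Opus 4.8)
The plan is to reduce $U$ recursively to a block-diagonal unitary with scalar last entry, exploiting the covering-dimension hypothesis through the connected stable rank of $C(X)$. By Rieffel's estimates (see \cite{R1,R2}, cf.\ \cite{X}), $\dim X\le 2$ forces $\csr(C(X))\le 2$; hence, by the definition of $\csr$, for every $q\ge 2$ the group $\U_0(\M_q(C(X)))$ acts transitively on $S_q(C(X))$. In particular, every $\xi\in C(X)^q$ with $\xi^*\xi=1$ is the first column of some element of $\U_0(\M_q(C(X)))$.

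I would then induct on $p$. For $p=1$ there is nothing to prove (take $U_0=1$ and $v=U$). Let $p\ge 2$ and let $\xi\in C(X)^p$ be the first column of $U$; since $\xi^*\xi=1$, the transitivity just recalled produces $W\in\U_0(\M_p(C(X)))$ with $W\xi=e_1$, the first standard basis vector. Then $WU$ is a unitary whose first column is $e_1$, and a brief computation with the relation $(WU)(WU)^*=1$ shows that the rest of the first row of $WU$ also vanishes, so $WU=\diag(1,A)$ for some $A\in\U(\M_{p-1}(C(X)))$. By the induction hypothesis, $A=A_0\,\diag(1_{p-2},v)$ with $A_0\in\U_0(\M_{p-1}(C(X)))$ and $v\in\U(C(X))$, whence
$$
U=W^{-1}\,\diag(1,A_0)\,\diag(1_{p-1},v),
$$
and $U_0:=W^{-1}\diag(1,A_0)$ lies in $\U_0(\M_p(C(X)))$, since a path from $A_0$ to $1_{p-1}$ induces a path from $\diag(1,A_0)$ to $1_p$. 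The one point requiring care in the bookkeeping is to peel off the \emph{first} coordinate (not the last) at each stage, so that the scalar $v$ delivered by the recursion automatically sits in the $p$-th slot, as demanded by the statement.

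So the real content is the estimate $\csr(C(X))\le 2$ for $\dim X\le 2$, together with the elementary fact that a unitary with first column $e_1$ is block diagonal; beyond that one only has to check that $p\ge\csr(C(X))$ at each stage of the recursion, which is automatic because $p\ge 2$ throughout. I do not anticipate a serious obstacle once these index matters are made precise. (One could instead bypass $\csr$ and argue topologically: the determinant gives a fibre bundle $SU(p)\to U(p)\xrightarrow{\det}U(1)$ whose fibre is $2$-connected, so for $\dim X\le 2$ the induced map $[X,U(p)]\to[X,U(1)]$ is a bijection and $U$ is homotopic through unitaries to $\diag(1_{p-1},\det U)$; but the $\csr$ route matches the tools already set up in the paper.)
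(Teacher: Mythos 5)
Your proposal is correct and follows essentially the same route as the paper: both deduce $\csr(C(X))\le 2$ from $\dim X\le 2$ (the paper cites Nagy's estimate $\csr(C(X))\le\big[\frac{\dim X+1}{2}\big]+1$), use transitivity of $\U_0(\M_p(C(X)))$ on unimodular columns to peel off the first coordinate, and recurse down to a scalar in the last slot. The parenthetical fibre-bundle argument is a fine alternative but is not what the paper does.
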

\begin{proof}We have from \cite{N},
$\csr(C(X))\le\bigg[\dfrac{\dim X+1}{2}\bigg]+1\le 2$, where $[x]$ stands for the greatest integer which is less than or
equal to $x$.

Let $u_1$ be the first column of $U$. Then $u_1\in\text{S}_p(C(X))$ and consequently, there is
$U_1\in\U_0(\M_p(C(X))$ such that $u_1$ becomes the first column of $U_1$ for
 $\csr(C(X))\le 2\le p$. Put $W=U_1^*U$. Then the unitary element $W$ has the form
$W=\diag(1,V_1)$ for some $V_1\in\U(\M_{p-1}(C(X)))$.

Repeating above procedure to $V_1$, there are $U_2\in\U_0(\M_{p-1}(C(X)))$ and $V_2\in\U(\M_{p-2}(C(X)))$ such that
$V_1=U_2\,\diag(1,V_2)$. So $U=U_1U_2\,\diag(1_2,V_2)$. By induction, we can finally find $U_0\in\U_0(\M_p(C(X)))$
and $v\in\U(C(X))$ such that $U=U_0\,\diag(1_{p-1},v)$.
\end{proof}

According to \cite{Xue}, $\{x\in M\vert\, \theta^k(x)=x\}=M_\theta$,
$k=2,\cdots,p-1$. Now set $M_0=M\backslash M_\theta$. Let
$M\slash\theta$ (resp. $M_0\slash\theta$)) denote the orbit space of
$\theta$ and let $P$ be the canonical projective map of $M$ (or
$M_0$) onto $M\slash\theta$ (or $M_0\slash\theta$). Let $O_\theta(x)=\{x,\theta(x),\cdots,\theta^{p-1}(x)\}$
denote the orbit of $x$ in $M$ or $M_0$. For the pair $(M,\theta)$, the dynamical system
$(C(M),\theta,\mathbb Z_p)$ (resp. $(C_0(M_0),\theta,\mathbb Z_p))$
yields a crossed product $C^*$--algebra $C(M)\times_\theta\mathbb
Z_p$ (resp. $C_0(M_0)\times_\theta\mathbb Z_p)$. Set
\begin{align*}
\D(M,\theta)&=\left\{\begin{pmatrix} f_0&f_1&\hdots&f_{p-1}\\
                              \theta (f_{p-1})&\theta (f_0)&\hdots&\theta (f_{p-2})\\
                              \hdotsfor 4\\
                              \theta^{p-1}(f_1)&\theta^{p-1}(f_2)&\hdots&
                              \theta^{p-1}(f_0)\end{pmatrix};\, f_0,\cdots,f_{p-1}
                              \in C(M)\right\}\\
\D(M_0,\theta)&=\left\{\begin{pmatrix} f_0&f_1&\hdots&f_{p-1}\\
                              \theta (f_{p-1})&\theta (f_0)&\hdots&\theta (f_{p-2})\\
                              \hdotsfor 4\\
                              \theta^{p-1}(f_1)&\theta^{p-1}(f_2)&\hdots&
                              \theta^{p-1}(f_0)\end{pmatrix};\, f_0,\cdots,f_{p-1}
                              \in C_0(M_0)\right\},
\end{align*}
where $\theta (f)(x)=f(\theta (x)),\forall x\in M$ (resp. $M_0$), $f\in C(M)$ (resp. $C_0(M_0)$).
By 7.6.1 and 7.6.5 of \cite{Pd}, we have $C(M)\times_\theta\mathbb Z_p\cong\D(M,\theta)\subset\text{M}_n(C(M))$ and
$C_0(M_0)\times_\theta\mathbb Z_p\cong\D(M_0,\theta)\subset\text{M}_n(C_0(M_0))$.

 Let $\omega = \text{e}^{{2\pi i} \slash p}$. Put
 $e_j=\diag(\underset{j-1}{\underbrace{0,\cdots,0}},1,\underset{p-j}{\underbrace{0,\cdots,0}})$ and
$$
\Omega_p=\begin{pmatrix} {1\over\sqrt p}&{\omega\over\sqrt p}&\hdots&{\omega^{p-1}
                      \over\sqrt p}\\
                    {1\over\sqrt p}&{\omega^2\over\sqrt p}&\hdots&{(\omega^2)^{p-1}
                      \over\sqrt p}\\
                    \hdotsfor 4\\
                    {1\over\sqrt p}&{\omega^{p-1}\over\sqrt p}&\hdots&{(\omega^{p-1})^{p-1}
                      \over\sqrt p}\\
                    {1\over\sqrt p}&{1\over\sqrt p}&\hdots&{1\over\sqrt p}
\end{pmatrix},\quad P_j=\Omega_p^*e_j\Omega_p,\ j=1,\cdots,p.
$$
It is easy to check that $\Omega_p$ is unitary in
$\M_p(\C)$ and $P_1,\cdots,P_p$ are projections in $\D(M,\theta)$. Define $*$--homomorphism
$\pi\colon\D(M,\theta)\rightarrow\doplus^{p-1}_{j=0}C(M_\theta)$ as
$$
\pi\left(\!\begin{pmatrix}f_0\!&f_1\!&\hdots\!&f_{p-1}\!\\
                      \theta (f_{p-1})&\theta (f_0)&\hdots&\theta (f_{p-2})\\
                    \hdotsfor 4\\
                  \theta^{p-1}(f_1)&\theta^{p-1}(f_2)&\hdots&\theta^{p-1}(f_0)
\end{pmatrix}\!\right)\!
=\!\left(\sum^{p-1}_{j=0}\omega^{j(p-1)}f_j\vert_{M_\theta},\cdots,\!
                            \sum^{p-1}_{j=0}f_j\vert_{M_\theta}\right ).
$$
$\pi$ induces following exact sequence of $C^*$--algebras:
\begin{equation}\label{XX}
0\longrightarrow\D(M_0,\theta)\stackrel{l}{\longrightarrow}\D(M,\theta)\stackrel{\pi}{\longrightarrow}
\doplus^{p-1}_{j=0}C(M_\theta)\longrightarrow 0.
\end{equation}

\begin{lemma}\label{L1}
For the pair $(M,\theta)$, we have
\begin{enumerate}
\item[(1)] Every irreducible representation of $\D(M_0,\theta)$ is equivalent to the
representation $\pi_x$, where $\pi_x(a)=a(x)$ for some $x\in M_0$
and $\forall a\in\D(M_0,\theta)$ and $P(x)\rightarrow [\pi_x]$ gives
a homeomorphism of $M_0\slash\theta$ onto
$\widehat{\D(M_0,\theta)}$--the spectrum of $\D(M_0,\theta)$, where
we identify $\D(M_0,\theta)$ with $\{\,a\in\D(M,\theta)\vert\,a\vert_{M_\theta}=0\};$
\item[(2)] $\D(M_0,\theta)$ is a $p$--homogeneous algebra which is $*$--isomorphic to
$C_0(M_0\slash\theta, E)$, where $E$ is a matrix  bundle over $M_0\slash\theta$ with fiber $\M_p(\C);$
\item[(3)] Let $\sigma$ be a pure state on $\D(M,\theta)$. Then $\sigma$ is multiplicable iff there is $x_0\in M_\theta$
such that
$$
\sigma\left(\begin{pmatrix} f_0&f_1&\hdots&f_{p-1}\\
                              \theta (f_{p-1})&\theta (f_0)&\hdots&\theta (f_{p-2})\\
                              \hdotsfor 4\\
                              \theta^{p-1}(f_1)&\theta^{p-1}(f_2)&\hdots&
                              \theta^{p-1}(f_0)\end{pmatrix}\right)=\sum\limits^{p-1}_{j=0}\omega^{jk}f_j(x_0)
$$
for some $k\in\{0,\cdots,p-1\}$. We let $\sigma_{x_0,k}$ denote the $\sigma$.
\end{enumerate}
\end{lemma}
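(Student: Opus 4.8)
The plan is to establish the local structure of $\D(M_0,\theta)$ first, then deduce (2), and finally read off (1) and (3). Throughout I use the identification $\D(M_0,\theta)=\{a\in\D(M,\theta)\vert\,a\vert_{M_\theta}=0\}\subset\M_p(C(M))$, so that if $a\in\D(M,\theta)$ has first row $(f_0,\dots,f_{p-1})$ then $a(x)_{lm}=f_{m-l}(\theta^l(x))$ (indices mod $p$), and evaluation $\pi_x(a)=a(x)$ makes sense for every $x\in M$. \textbf{Step 1 (point evaluations on $M_0$ are irreducible).} Fix $x\in M_0$. By \cite{Xue} the points $x,\theta(x),\dots,\theta^{p-1}(x)$ are pairwise distinct and lie in $M_0$, hence are separated from the closed set $M_\theta$; a Urysohn/partition-of-unity argument on the normal space $M$ produces, for any prescribed scalars $c_{jl}$ ($j,l\in\{0,\dots,p-1\}$), functions $f_j\in C(M)$ vanishing on $M_\theta$ with $f_j(\theta^l(x))=c_{jl}$. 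Since $(l,m)\mapsto(m-l,l)$ is a bijection of $\{0,\dots,p-1\}^2$, the matrix $a(x)=(c_{m-l,\,l})_{lm}$ ranges over all of $\M_p(\C)$, so $\pi_x(\D(M_0,\theta))=\M_p(\C)$ and $\pi_x$ is irreducible. A direct computation with the cyclic shift $\lambda=(\delta_{l,m+1})_{lm}$ gives $\lambda\,a(x)\,\lambda^{-1}=a(\theta^{-1}(x))$, whence $\pi_{\theta^k(x)}\cong\pi_x$ for all $k$; conversely, if $P(x)\neq P(y)$ then $O_\theta(x)\cap O_\theta(y)=\varnothing$ and Urysohn yields a diagonal element of $\D(M_0,\theta)$ lying in $\ker\pi_x\setminus\ker\pi_y$, so $\pi_x\not\cong\pi_y$.

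\textbf{Step 2 (local triviality and statement (2)).} Given $P(x)\in M_0/\theta$, normality of $M$ together with the freeness of $\theta$ on $M_0$ furnishes an open $V\ni x$ with $V,\theta(V),\dots,\theta^{p-1}(V)$ pairwise disjoint and contained in $M_0$; shrinking $V$ I may assume $W:=P(V)$ is open in $M_0/\theta$, $P\vert_V\colon V\to W$ is a homeomorphism, and $P^{-1}(W)=\bigsqcup_k\theta^k(V)$. Restricting the data $(f_0,\dots,f_{p-1})$ to $P^{-1}(W)$, transporting the $p$ pieces $f_j\vert_{\theta^k(V)}$ back to $W$ along $P\vert_V$, and bookkeeping the indices, one checks that the circulant-type product of $\D(M,\theta)$ becomes ordinary matrix multiplication, giving a $*$-isomorphism between the restriction of $\D(M_0,\theta)$ over $W$ and $\M_p(C_0(W))$. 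On overlaps these comparison maps differ by a $C_0$-family of automorphisms of $\M_p(\C)$, each inner ($\mathrm{Ad}\,u$, $u\in\mathrm{PU}(p)$), so they are the charts of a locally trivial $\M_p(\C)$-bundle $E\to M_0/\theta$ — concretely $E=M_0\times_{\Z_p}\M_p(\C)$ with $\Z_p$ acting by $\theta$ on $M_0$ and by $\mathrm{Ad}\,\lambda$ on $\M_p(\C)$ — and $\D(M_0,\theta)\cong C_0(M_0/\theta,E)$ is $p$-homogeneous. Statement (1) then follows from the standard structure theory of $p$-homogeneous $C^*$-algebras: every irreducible representation of the section algebra of a locally trivial $\M_p(\C)$-bundle over a locally compact Hausdorff space is, up to unitary equivalence, evaluation at a point, and $P(x)\mapsto[\pi_x]$ is a homeomorphism onto the spectrum.

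\textbf{Step 3 (multiplicative pure states, statement (3)).} A pure state of $\D(M,\theta)$ is multiplicative precisely when its GNS representation is one-dimensional, so I must identify the characters of $\D(M,\theta)$. By (2) every irreducible representation of the ideal $\D(M_0,\theta)$ has dimension $p>1$; hence any one-dimensional representation of $\D(M,\theta)$ kills $\D(M_0,\theta)$, i.e. factors through the quotient map $\pi\colon\D(M,\theta)\to\doplus^{p-1}_{j=0}C(M_\theta)$ in the exact sequence \eqref{XX}. Conversely, every character of $\doplus^{p-1}_{j=0}C(M_\theta)$ is evaluation of one coordinate at a point $x_0\in M_\theta$. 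Composing with the explicit formula for $\pi$, the coordinate indexed by $k$ sends the displayed matrix to $\sum^{p-1}_{j=0}\omega^{jk}f_j\vert_{M_\theta}$, with value $\sum^{p-1}_{j=0}\omega^{jk}f_j(x_0)$ at $x_0$; these are exactly the functionals $\sigma_{x_0,k}$, and being characters they are automatically pure. This proves (3).

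\textbf{Main obstacle.} The technical heart is Step 2: producing the $\theta$-equivariant decomposition $P^{-1}(W)=\bigsqcup_k\theta^k(V)$ and verifying that under it the twisted multiplication of $\D(M,\theta)$ becomes honest matrix multiplication, so that the local isomorphisms with $\M_p(C_0(W))$ patch into a genuine matrix bundle. The crucial input making this possible is the freeness of $\theta$ on $M_0$, i.e. $\{x\vert\,\theta^k(x)=x\}=M_\theta$ for $k=1,\dots,p-1$ from \cite{Xue}; everything in Steps 1 and 3 is bookkeeping layered on top of this structure.
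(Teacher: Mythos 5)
Your argument is correct, but it reaches the three statements by a genuinely different route than the paper. The paper proves (1) first: it takes an arbitrary irreducible representation of $\D(M_0,\theta)$, extends it to the ambient algebra $\M_p(C_0(M_0))$ via \cite[Proposition 2.10.2]{Di} so that it is forced to be a restriction of a point evaluation, and only then uses the surjectivity $\pi_x(\D(M_0,\theta))=\M_p(\C)$ (the same $(l,m)\mapsto(m-l,l)$ bookkeeping you use) to see that this restriction is already irreducible; local triviality of $E$ in (2) is then obtained abstractly from $p$--homogeneity together with the Hausdorff spectrum by citing \cite{Fell}, and (3) is dispatched entirely by citation to \cite{LL}. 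You work in the opposite order: you build the local trivializations by hand from the slices $V$ with $P^{-1}(P(V))=\bigsqcup_k\theta^k(V)$, read off (1) afterwards from the standard description of the spectrum of the section algebra of a locally trivial $\M_p(\C)$--bundle, and give a self-contained proof of (3) by noting that a character must annihilate the ideal $\D(M_0,\theta)$ (all of whose irreducible representations are $p$--dimensional, $p\ge 2$) and hence factors through $\doplus^{p-1}_{j=0}C(M_\theta)$, whose characters are exactly coordinate evaluations at points of $M_\theta$. Your route yields the concrete model $E\cong M_0\times_{\Z_p}\M_p(\C)$ and removes the reliance on the case analysis in \cite{LL}; its price is the point-set work in Step 2, where ``$C_0$--family of inner automorphisms'' should really read ``locally constant (hence continuous) family'', since on an overlap of two charts the integer $k$ with $\theta^k(V)\cap V'\ne\varnothing$ is only locally constant. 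Both proofs ultimately rest on the same input from \cite{Xue}: freeness of $\theta$ on $M_0$.
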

\begin{proof}
Let $(\Pi,K)$ be an irreducible representation of $\D(M_0,\theta)$. Then by \cite[Proposition 2.10.2]{Di},
there is an irreducible representation $(\rho,H)$ of $\text{M}_p(C_0(M_0))$ such that $K\subset H$ and $\Pi(\cdot)=
\rho(\cdot)\vert_K$. It is well-known that $(\rho,H)$ is equivalent to $(\rho_{x_0},\mathbb C^p)$ for some $x_0\in
M_0$, where $\rho_x((f_{ij})_{p\times p})=(f_{ij}(x))_{p\times p}$, $x\in M_0$ and $f_{ij}\in C_0(M_0)$, $i,j=1,\cdots,p$.

Put $\pi_x=\rho_x\vert_{\D(M_0,\theta)}$, $x\in M_0$. Given $A=(a_{ij})_{p\times p}\in
\text{M}_p(\mathbb C)$. Since $x_0,\theta(x_0),\cdots,$ $\theta^{p-1}(x_0)$ are mutually
different in $M_0$, we can find $f_0,\cdots,f_{p-1}\in C(M)$ such that
$f_j\vert_{M_\theta}=0,\ j=0,\cdots,p-1$ and
$$
\pi_{x_0}\left(\begin{pmatrix} f_0&f_1&\hdots&f_{p-1}\\
                              \theta (f_{p-1})&\theta (f_0)&\hdots&\theta (f_{p-2})\\
                              \hdotsfor 4\\
                              \theta^{p-1}(f_1)&\theta^{p-1}(f_2)&\hdots&
                              \theta^{p-1}(f_0)\end{pmatrix}\right)=A.
$$
This means that $(\pi_{x_0},\mathbb C^p)$ is an irreducible representation of $\D(M_0,\theta)$ and hence $(\Pi,K)$ is
equivalent to $(\pi_{x_0},\mathbb C^p)$. Thus $\D(M_0,\theta)$ is a p--homogeneous $C^*$--algebra.

Let $x_0\in M_0$ and $x_1=\theta^j(x_0)$ for some $j\in\{1,\cdots,p-1\}$. It is easy to check that there is a unitary
matrix $U$ in $\text{M}_p(\mathbb C)$ such that $\pi_{x_0}(a)=U\pi_{x_1}(a)\,U^*$, $\forall\,a\in\D(M_0,\theta)$, i.e.,
$(\pi_{x_0},\mathbb C^p)$ and $(\pi_{x_1},\mathbb C^p)$ are equivalent; On the other hand, if $(\pi_{x_0},\mathbb C^p)$
and $(\pi_{x_1},\mathbb C^p)$ are equivalent and $O_\theta(x_0)\cap O_\theta(x_1)=\varnothing$, then we can choose
$g_0,\cdots,g_{p-1}\in C(M)$ such that $g_j(x)=0,\ \forall\,x\in M_\theta\cup O_\theta(x_0)$ and
$g_j\vert_{O_\theta(x_1)}\not=0$, $j=0,\cdots,p-1$. Set
$$
G=\begin{pmatrix} g_0&g_1&\hdots&g_{p-1}\\
                              \theta (g_{p-1})&\theta (g_0)&\hdots&\theta (g_{p-2})\\
                              \hdotsfor 4\\
                              \theta^{p-1}(g_1)&\theta^{p-1}(g_2)&\hdots&
                              \theta^{p-1}(g_0)\end{pmatrix}\in\D(M_0,\theta).
$$
Then $\pi_{x_0}(G)=0$, while $\pi_{x_1}(G)\not=0$. Therefore, we have $O_\theta(x_0)\cap O_\theta(x_1)\not=\varnothing$,
that is, $x_0=\theta^j(x_1)$ for some $j\in\{0,1,\cdots,p-1\}$. So the map $P(x)\mapsto [\pi_x]$ gives a homeomorphism
of $M_0/\theta$ onto $\widehat{\D(M_0,\theta)}$ by using \cite[Lemma 3.3.3]{Di}.

Set $[x]=P(x)\in M_0/\theta$ and $D([x])=\D(M_0,\theta)/\Ker\pi_x\cong\text{M}_p(\mathbb C)$, $\forall\,x\in M_0$.
Let $a([x])$ be the canonical image of $a\in\D(M_0,\theta)$ in $D([x])$. Put $E=\dcup_{[x]\in M_0/\theta}D([x])$.
Then $E$ is a fiber bundle (matrix bundle) over $M_0/\theta$ with fiber $\text{M}_p(\mathbb C)$ (cf. \cite[\S 3.2, P.249]{Fell}).
The $*$--isomorphism from $\D(M_0,\theta)$ onto $C_0(M_0/\theta,E)$ is defined by $\phi(a)([x])=a([x])$, $\forall\,
a\in\D(M_0,\theta)$ and $[x]\in M_0/\theta$.

The proof of (3) comes from Corollary 1, Case 1 and case 2 on P77 and case 2 on P79 of \cite{LL}.
\end{proof}

Assume that the matrix bundle $E$ above is trivial, i.e., there is a homeomorphism $\Gamma\colon E\rightarrow
M_0/\theta\times\M_p(\C)$ such that $\Gamma(D([x]))=[x]\times\M_p(\C)$, $\forall\,[x]\in M_0/\theta$. Then $\Gamma$
induces an isomorphism $\Gamma^*\colon C_0(M_0/\theta,E)\rightarrow \M_p(C_0(M_0/\theta))$ given by $\Gamma^*(f)
([x])=\Gamma(f([x]))$, $\forall\,f\in C_0(M_0/\theta,E)$ and $[x]\in M_0/\theta$. Set
$\Phi=l\circ\phi^{-1}\circ\Gamma^{-1}$. Applying Lemma \ref{L1} to (\ref{XX}), we have following:
\begin{corollary}\label{CA}
Let $(M,\theta)$ be the pair such that all matrix bundles over $M_0/\theta$ is trivial. Then we
have following exact sequence of $C^*$--algebras
\begin{equation}\label{YY}
0\longrightarrow\M_p(C_0(M_0/\theta))\stackrel{\Phi}{\longrightarrow}\D(M,\theta)
\stackrel{\pi}{\longrightarrow}\doplus^{p-1}_{j=0} C(M_\theta)\longrightarrow 0.
\end{equation}
\end{corollary}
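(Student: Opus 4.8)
The plan is to obtain Corollary~\ref{CA} by a direct bookkeeping argument: the short exact sequence (\ref{XX}) is already in hand, and all that remains is to replace the ideal $\D(M_0,\theta)$ by the $*$--isomorphic copy $\M_p(C_0(M_0/\theta))$ furnished by Lemma~\ref{L1}(2) together with the standing hypothesis that every matrix bundle over $M_0/\theta$ is trivial, and to check that the resulting map $\Phi$ plays the role of $l$ in the sequence.

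Concretely, I would proceed as follows. First, by Lemma~\ref{L1}(2) the map $\phi\colon\D(M_0,\theta)\to C_0(M_0/\theta,E)$ given by $\phi(a)([x])=a([x])$ is a $*$--isomorphism, where $E=\dcup_{[x]\in M_0/\theta}D([x])$ is the matrix bundle over $M_0/\theta$ with fibre $\M_p(\C)$. By hypothesis $E$ is trivial, so we may fix a fibre--preserving homeomorphism $\Gamma\colon E\to M_0/\theta\Times\M_p(\C)$, which induces the $*$--isomorphism $\Gamma^*\colon C_0(M_0/\theta,E)\to\M_p(C_0(M_0/\theta))$ defined before the statement of the corollary. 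Hence the map $\Phi$ defined there is the composite of two $*$--isomorphisms with the injective $*$--homomorphism $l$ of (\ref{XX}), and is therefore an injective $*$--homomorphism of $\M_p(C_0(M_0/\theta))$ into $\D(M,\theta)$. Since $\phi$ and $\Gamma^*$ are bijective, the image of $\Phi$ coincides with the image of $l$; by exactness of (\ref{XX}) at $\D(M,\theta)$ this image is exactly $\Ker\pi$. Finally, (\ref{XX}) already asserts that $\pi$ is surjective onto $\doplus^{p-1}_{j=0}C(M_\theta)$, so the sequence $0\longrightarrow\M_p(C_0(M_0/\theta))\stackrel{\Phi}{\longrightarrow}\D(M,\theta)\stackrel{\pi}{\longrightarrow}\doplus^{p-1}_{j=0}C(M_\theta)\longrightarrow 0$ is exact, which is precisely (\ref{YY}).

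I do not expect any real obstacle: this is a corollary in the literal sense, a relabelling of (\ref{XX}) via Lemma~\ref{L1}. The only point that deserves an explicit line is the verification that the trivialisation $\Gamma$ promotes to a genuine $C^*$--algebra isomorphism $\Gamma^*$ on sections --- that is, that $\Gamma^*$ is fibrewise linear, multiplicative, $*$--preserving and norm--preserving; the last is automatic because each fibre $D([x])\cong\M_p(\C)$ carries a unique $C^*$--norm, so a fibrewise linear multiplicative $*$--map is necessarily isometric. Everything else is a two--arrow diagram chase.
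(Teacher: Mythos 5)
Your proposal is correct and takes essentially the same route as the paper, which offers no separate proof at all: it simply defines $\Phi=l\circ\phi^{-1}\circ(\Gamma^{*})^{-1}$ in the paragraph preceding the statement and declares the corollary to follow by ``applying Lemma~\ref{L1} to (\ref{XX}).'' Your two--arrow diagram chase, together with the remark that a fibrewise linear multiplicative $*$--map between copies of $\M_p(\C)$ is automatically isometric, just makes explicit the routine verification the paper leaves to the reader.
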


\section{Main results}
\setcounter{equation}{0}
\quad\ Consider the exact sequence of $C^*$--algebras
\begin{equation}\label{2X}
0\longrightarrow\B\stackrel{i}{\longrightarrow}\E\stackrel{q}{\longrightarrow}\A\longrightarrow 0,
\end{equation}
where $\B$ is a separable $C^*$--algebra and $i(\B)$ is the essential ideal of $\E$. Let $\{u_n\}$ be an approximate identity of
$\B$ and $M(\B)$ be the multiplier algebra of $\B$. Define $*$--monomorphism $\rho\colon\E
\rightarrow M(\B)$ by $\mu(e)=\lim\limits_n i^{-1}(i(u_n)e)$, $\forall\,e\in\E$, where
the limit is taken as the strict limit in $M(\B)$. Then Busby invariant
$\tau_q\colon\A\rightarrow M(\B)/\B$, associated with (\ref{2X}) is given by
$\tau_q(a)=\Omega(\mu(e))$, where $e\in\E$ such that $q(e)=a$,
$\Omega\colon M(\B)\rightarrow M(\B)/\B$ is the quotient map. $\tau_q$ is well--defined and
is monomorphic since $i(\B)$ is essential. Please see \cite[Chapter 3]{JT} or
\cite[Chapter 5]{Lin}, or \cite[Chapter 3]{WO}) for details. In order to our main result,
we need following lemma, which comes from \cite[Lemma 3.2.2]{JT}.
\begin{lemma}\label{LYY}
Let $\tau_q$ (resp. $\tau_{q'}$) be the Busby invariant associated following exact sequence of
$C^*$--algebras:
$$
0\longrightarrow\B\stackrel{i}{\longrightarrow}\E\stackrel{q}{\longrightarrow}\A\longrightarrow 0\quad
(\text{resp.}\ 0\longrightarrow\B\stackrel{i'}{\longrightarrow}\E'\stackrel{q'}{\longrightarrow}
\A\longrightarrow 0),
$$
where $i(\B)$ (resp. $i'(\B)$) is the essential ideal of $\E$ (resp. $\E'$). Assume that there is a unitary
element $u\in M(\B)$ such that $\tau_q(a)=\Omega(u)\tau_{q'}(a)\Omega(u^*)$, $\forall\,a\in\A$.
Then $\E$ is $*$--isomorphic to $\E'$.
\end{lemma}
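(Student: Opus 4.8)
The plan is to use the standard fibre-product (pull-back) description of an extension in terms of its Busby invariant, and then observe that conjugation by $u$ carries one fibre product onto the other. Let $\mu\colon\E\to M(\B)$ and $\mu'\colon\E'\to M(\B)$ be the canonical $*$--monomorphisms attached to the two extensions, so that $\Omega\circ\mu=\tau_q\circ q$ and $\Omega\circ\mu'=\tau_{q'}\circ q'$. First I would form $j=(\mu,q)\colon\E\to M(\B)\oplus\A$; since $i(\B)$ is essential, $\mu$ is injective (its kernel is the annihilator of $i(\B)$, which vanishes by essentiality), hence $j$ is injective too. I claim $j(\E)$ equals the fibre product
$$
\E_{\tau_q}=\{(m,a)\in M(\B)\oplus\A\ :\ \Omega(m)=\tau_q(a)\}.
$$
The inclusion $j(\E)\subseteq\E_{\tau_q}$ is immediate from $\Omega\circ\mu=\tau_q\circ q$. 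For the reverse, given $(m,a)\in\E_{\tau_q}$ pick $e\in\E$ with $q(e)=a$; then $m-\mu(e)\in\Ker\Omega=\B$, and writing $b=m-\mu(e)$ and using that $\mu\circ i$ is the canonical embedding $\B\hookrightarrow M(\B)$ one gets $j(e+i(b))=(m,a)$. Hence $j\colon\E\xrightarrow{\ \cong\ }\E_{\tau_q}$, and symmetrically $j'=(\mu',q')\colon\E'\xrightarrow{\ \cong\ }\E_{\tau_{q'}}$.

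Next I would transport one fibre product to the other via $u$. Define $\Theta\colon\E_{\tau_{q'}}\to\E_{\tau_q}$ by $\Theta(m,a)=(umu^*,a)$. If $\Omega(m)=\tau_{q'}(a)$ then
$$
\Omega(umu^*)=\Omega(u)\,\Omega(m)\,\Omega(u^*)=\Omega(u)\,\tau_{q'}(a)\,\Omega(u^*)=\tau_q(a)
$$
by the hypothesis, so $(umu^*,a)\in\E_{\tau_q}$ and $\Theta$ is well defined. Since $m\mapsto umu^*$ is a $*$--automorphism of $M(\B)$, $\Theta$ is a $*$--homomorphism, and it is bijective with inverse $(m,a)\mapsto(u^*mu,a)$. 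Thus $\E_{\tau_{q'}}\cong\E_{\tau_q}$, and composing, $\E\cong\E_{\tau_q}\cong\E_{\tau_{q'}}\cong\E'$; explicitly $j^{-1}\circ\Theta\circ j'\colon\E'\to\E$ is a $*$--isomorphism.

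The part I expect to need the most care is the first step, the identification $\E\cong\E_{\tau_q}$: it uses $\Ker\Omega=\B$, the fact that $\mu\circ i$ is the canonical inclusion of $\B$ into $M(\B)$, and the essentiality of $i(\B)$ (which is what makes $\mu$, and hence $j$, injective). Once that correspondence between extensions and Busby invariants is in place, the rest of the argument is the one-line observation that conjugating $\tau_{q'}$ by $\Omega(u)$ produces $\tau_q$ — precisely the given hypothesis. Since this lemma is quoted as \cite[Lemma 3.2.2]{JT}, one could also simply cite it, but the above gives a self-contained proof.
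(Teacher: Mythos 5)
Your argument is correct: the identification of $\E$ with the pull-back $\E_{\tau_q}=\{(m,a)\in M(\B)\oplus\A:\Omega(m)=\tau_q(a)\}$ via $j=(\mu,q)$ (injective because $i(\B)$ is essential, surjective because $\Ker\Omega=\B$ and $\mu\circ i$ is the canonical inclusion), followed by the observation that $(m,a)\mapsto(umu^*,a)$ is a $*$--isomorphism $\E_{\tau_{q'}}\to\E_{\tau_q}$, is exactly the standard proof. The paper gives no proof of its own --- it simply cites \cite[Lemma 3.2.2]{JT} --- and your pull-back argument is precisely the one underlying that reference, so this is essentially the same approach, made self-contained.
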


Let $X$ be a locally compact Hausdorff space. Let $X^+$ (resp. $\beta(X)$) denote the
one--point (resp. Stone--C\v{e}ch) compactification of $X$. Set $\chi(X)=\beta(X)\backslash X$ (the corona set of
$X$).
\begin{lemma}\label{LY}
Let $X$ be a locally compact Hausdorff space with $\dim X\le 2$.
If ${\mathrm H}^2(X,\Z)\cong 0$, then every matrix bundle over $X$ is trivial.
\end{lemma}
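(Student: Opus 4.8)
The plan is to reduce the statement to a Čech cohomology computation about principal $PU(p)$–bundles. Recall that a matrix bundle over $X$ with fiber $\M_p(\C)$ is classified by its structure group $\mathrm{Aut}(\M_p(\C))\cong PU(p)=U(p)/\mathbf T$, so isomorphism classes of such bundles over $X$ are in bijection with $\check{\mathrm H}^1(X,\underline{PU(p)})$, the first Čech cohomology with coefficients in the sheaf of germs of continuous $PU(p)$–valued functions. Triviality of every matrix bundle over $X$ is therefore equivalent to the vanishing of this pointed set. First I would set up the short exact sequence of topological groups $1\to\mathbf T\to U(p)\to PU(p)\to 1$, which yields an exact sequence of sheaves $1\to\underline{\mathbf T}\to\underline{U(p)}\to\underline{PU(p)}\to 1$ on $X$.

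Next I would invoke the long exact cohomology sequence associated to this sheaf sequence (valid as a sequence of pointed sets in the nonabelian range). The relevant piece is
\begin{equation*}
\check{\mathrm H}^1(X,\underline{U(p)})\longrightarrow\check{\mathrm H}^1(X,\underline{PU(p)})\stackrel{\delta}{\longrightarrow}\check{\mathrm H}^2(X,\underline{\mathbf T}).
\end{equation*}
The boundary map $\delta$ sends a $PU(p)$–bundle to its Dixmier–Douady class; its image lies in $\check{\mathrm H}^2(X,\underline{\mathbf T})$. Now $\underline{\mathbf T}$ fits in the exponential sequence $0\to\underline{\Z}\to\underline{\R}\to\underline{\mathbf T}\to 0$, and since $\underline{\R}$ is a fine sheaf (soft, as $X$ is paracompact) its higher cohomology vanishes, giving $\check{\mathrm H}^2(X,\underline{\mathbf T})\cong\check{\mathrm H}^3(X,\underline{\Z})=\mathrm H^3(X,\Z)$. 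But $\dim X\le 2$ forces $\mathrm H^3(X,\Z)\cong 0$ (Čech cohomology vanishes above the covering dimension for paracompact spaces). Hence $\delta$ is the zero map and every class in $\check{\mathrm H}^1(X,\underline{PU(p)})$ comes from a $U(p)$–bundle, i.e.\ from a rank–$p$ complex vector bundle $V$ over $X$.

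It then remains to show that the matrix bundle $\mathrm{End}(V)$ associated to such a $V$ is trivial as a matrix bundle, equivalently that $V$ can be taken trivial up to the $\mathbf T$–ambiguity. For this I would use the classification of complex vector bundles over low–dimensional complexes: over a space of covering dimension $\le 2$, a rank–$p$ vector bundle with $p\ge 2$ is determined by its first two Chern classes $c_1(V)\in\mathrm H^2(X,\Z)$ and $c_2(V)\in\mathrm H^4(X,\Z)$, and $c_2$ vanishes automatically since $\mathrm H^4(X,\Z)\cong 0$. The hypothesis $\mathrm H^2(X,\Z)\cong 0$ kills $c_1$, so $V\cong\underline{\C}^p$ is trivial, and therefore $\mathrm{End}(V)\cong X\times\M_p(\C)$. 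Since, by the surjectivity onto $\check{\mathrm H}^1(X,\underline{PU(p)})$ established above, every matrix bundle arises this way, every matrix bundle over $X$ is trivial.

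The main obstacle I anticipate is making the nonabelian Čech cohomology bookkeeping rigorous for a general locally compact paracompact Hausdorff $X$ rather than a finite CW complex—in particular, justifying the exactness of the sequence of pointed sets at $\check{\mathrm H}^1$, the identification of $\delta$ with the Dixmier–Douady invariant, and the vanishing $\mathrm H^k(X,\Z)=0$ for $k>\dim X$. One clean way to sidestep part of this is to cite the standard fact (e.g.\ from the literature on continuous–trace $C^*$–algebras, such as Raeburn–Williams) that $p$–homogeneous $C^*$–algebras over $X$ are classified by the Dixmier–Douady class in $\check{\mathrm H}^3(X,\Z)$ together with a $K$–theoretic rank datum, so the dimension hypothesis alone kills the Dixmier–Douady class and the extra $\mathrm H^2(X,\Z)$ hypothesis handles the remaining twisting coming from $c_1$; I would choose whichever of these two routes is already available as a citation in the paper's bibliography.
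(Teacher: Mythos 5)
Your argument is correct in substance and is, at bottom, the same mechanism the paper uses, but you derive it from scratch where the paper outsources it. The paper first passes to the one--point compactification $X^+$ (so that it is working over a compact, hence paracompact, base), cites Howe's result \cite{RH} that under $\dim X^+\le 2$ and $\mathrm H^2(X^+,\Z)\cong 0$ every matrix bundle over $X^+$ has the form $V\otimes V^*$ for a complex vector bundle $V$, notes that all such $V$ are trivial, and then gets triviality over $X$ by extending a given matrix bundle over $X$ to one over $X^+$ (citing \cite{Hus} for the extension). Your two steps --- lifting a $PU(p)$--cocycle to a $U(p)$--cocycle because the Dixmier--Douady obstruction lands in $\check{\mathrm H}^2(X,\underline{\mathbf T})\cong\mathrm H^3(X,\Z)\cong 0$, and then killing $c_1(V)$ using $\mathrm H^2(X,\Z)\cong 0$ --- are exactly the content of Howe's theorem plus the low--dimensional classification of vector bundles, so the mathematics coincides; what your route buys is self--containedness, while the paper's buys brevity and a cleaner treatment of the non--compactness. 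The one point you should not leave as an ``anticipated obstacle'' is the paracompactness issue: a locally compact Hausdorff space need not be paracompact, and both the Čech classification of bundles by $\check{\mathrm H}^1(X,\underline{PU(p)})$, the fineness of $\underline{\mathbb R}$, and the vanishing $\mathrm H^k(X,\Z)=0$ for $k>\dim X$ all require it. The paper's device of working over $X^+$ (compact, hence paracompact) and extending bundles from $X$ to $X^+$ is precisely how one closes that gap, so you should either adopt that reduction or add a hypothesis ensuring paracompactness (in the paper's application $X=M_0/\theta$ is an open subset of a compact space, and the extension step is still needed).
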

\begin{proof} Let $M(X^+)$ denote the collection of all
(isomorphism classes of) matrix bundles of arbitrary degree over $X^+$ and $SM(X^+)$ denote the the distinguished
subsemigroup of $M(X)$, consisting of matrix bundles of the form $V\otimes V^*$,  where $V$ is an arbitrary complex
vector bundle over $X^+$, and $*$ denotes adjoints.
When $\dim X^+=\dim X\le 2$ and $\text{H}^2(X,\Z)=\mathrm H^2(X^+,\Z)\cong 0$, $M(X^+)=SM(X^+)$ by
\cite[Corollary 1]{RH} and all complex vector bundles over $X^+$ are trivial. Therefore, all matrix bundles of arbitrary
degree over $X^+$ is trivial. Because every matrix bundle over $X$ with fiber $\M_n(\C)$ can be extended to an matrix
bundle over $X^+$ with fiber $\M_n(\C)$ (using the same methods described in \cite[P110--P112]{Hus}), we get the assertion.
\end{proof}

By Lemma \ref{LY}, we may assume that the pair $(M,\theta)$ satisfies condition (A):
$$
\dim M\le 2\quad\text{and}\quad\mathrm H^2(M_0/\theta,\Z)\cong 0.
$$
\begin{lemma}\label{LZ}
Let the pair $(M,\theta)$ satisfy Condition (A). If $M_0=M\backslash M_\theta$ is dense in $M$,
then $\Phi(\M_p(C_0(M_0/\theta))$ is an essential ideal of $\D(M,\theta)$.
\end{lemma}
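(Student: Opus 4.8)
The plan is to recall that an ideal $I$ of a $C^*$-algebra $\mathcal{A}$ is essential iff its annihilator $I^\perp=\{a\in\mathcal{A}\mid aI=Ia=0\}$ is zero, equivalently iff every nonzero ideal of $\mathcal{A}$ has nonzero intersection with $I$. I would work with the ideal $J=\Phi(\M_p(C_0(M_0/\theta)))$, which by Corollary \ref{CA} (applicable since Condition (A) together with Lemma \ref{LY} forces all matrix bundles over $M_0/\theta$ to be trivial) is the kernel of $\pi$ in the exact sequence (\ref{YY}); under the identification of $\D(M_0,\theta)$ with $\{a\in\D(M,\theta)\mid a\vert_{M_\theta}=0\}$ from Lemma \ref{L1}(1), $J$ consists of the elements of $\D(M,\theta)$ all of whose entries vanish on $M_\theta$.

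First I would take a nonzero $a\in J^\perp\subset\D(M,\theta)$ and aim for a contradiction. Writing $a$ as the matrix with first row $(f_0,\dots,f_{p-1})$, nonzero means some $f_j$ is nonzero, so $f_j(x_1)\neq 0$ for some $x_1\in M$; since $M_0$ is dense in $M$ and the $f_j$ are continuous, we may in fact pick $x_0\in M_0$ with $f_j(x_0)\neq 0$. Next, because $x_0,\theta(x_0),\dots,\theta^{p-1}(x_0)$ are mutually distinct points of $M_0$ (the defining property of $M_0$ recorded before Lemma \ref{L1}), a partition-of-unity/Urysohn argument — exactly the one used inside the proof of Lemma \ref{L1} to show $\pi_{x_0}$ is onto — produces $b\in J$ (i.e.\ with all entries vanishing on $M_\theta$) such that $\pi_{x_0}(ab)\neq 0$ in $\M_p(\C)$; concretely one builds $b$ supported near the orbit $O_\theta(x_0)$ and disjoint from $M_\theta$ so that $\pi_{x_0}(b)$ is any prescribed matrix, and chooses that matrix to make the product nonzero. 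Then $ab\neq 0$, contradicting $a\in J^\perp$. Hence $J^\perp=0$ and $J$ is essential.

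I would organize the Urysohn step carefully: $M$ is a separable compact Hausdorff (hence normal) space, $M_\theta$ is closed, $O_\theta(x_0)$ is a finite set disjoint from $M_\theta$, so one gets a continuous function equal to $1$ at $x_0$, vanishing on $M_\theta$ and on $\{\theta(x_0),\dots,\theta^{p-1}(x_0)\}$; symmetrizing it under $\theta$ into the $\D$-matrix form as in the construction of $G$ in the proof of Lemma \ref{L1} gives the required $b\in J$ with $\pi_{x_0}(b)=E_{11}$ (or whichever matrix unit is convenient). Then $\pi_{x_0}(ab)=\pi_{x_0}(a)\pi_{x_0}(b)$, and since $\pi_{x_0}(a)$ has a nonzero entry in the column hit by $E_{11}$, the product is nonzero. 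The main obstacle is making sure the symmetrized function one writes down genuinely lies in $J$ — that is, that its entries $\theta^k(f_j)$ all still vanish on $M_\theta$ (immediate, since $\theta$ fixes $M_\theta$ pointwise) — and that separability/metrizability is not actually needed beyond normality; everything else is the same bookkeeping already performed for Lemma \ref{L1}, so the argument should be short.
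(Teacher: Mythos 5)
Your proposal is correct and follows essentially the same route as the paper: reduce essentiality to showing the annihilator of $\Phi(\M_p(C_0(M_0/\theta)))=\D(M_0,\theta)$ is zero, use the surjectivity of the point evaluation $\pi_{x_0}$ on $\D(M_0,\theta)$ at a point $x_0\in M_0$ (available because $x_0,\theta(x_0),\dots,\theta^{p-1}(x_0)$ are distinct and off $M_\theta$) to produce $b$ with $ab\neq 0$, and invoke density of $M_0$ to pass from $a\vert_{M_0}=0$ to $a=0$. The paper simply takes $b(x_0)=(a(x_0))^*$ instead of a matrix unit, which sidesteps your column-matching bookkeeping, but the argument is the same.
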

\begin{proof} Let $a\in\D(M,\theta)$ such that $ac=0,\ \forall\,c\in\D(M_0,\theta)=
\Phi(\M_p(C_0(M_0/\theta))$. If there is $x_0\in M_0$ such that $a(x_0)\not=0$, then we can find
a $b\in\D(M_0,\theta)$ such that $b(x_0)=(a(x_0))^*$. It follows that $a(x_0)
(a(x_0))^*=0$, i.e., $a(x_0)=0$, a contradiction. So $a\vert_{M_0}=0$. Since $M_0$ is dense in $M$ and all elements
in $a$ are continuous, we have $a=0$.
\end{proof}

It is known that
\begin{align*}
M(\M_p(C_0(M_0/\theta)))=\M_p(C_b(M_0/\theta))&\cong \M_p(C(\beta(M_0/\theta))),\\
\M_p(C_b(M_0/\theta))\slash \M_p(C_0(M_0/\theta))&\cong\M_p(C(\chi(M_0/\theta))).
\end{align*}

Assume that $(M,\theta)$ satisfies Condition (A) and the condition
that $M\backslash M_\theta$ is dense in $M$. Then the $*$--monomorphism
$\mu\colon\D(M,\theta)\rightarrow \M_p(C_b(M_0/\theta))$ is given by
$\mu(a)=\lim\limits_{n\to\infty}\Phi^{-1}(\Phi(u_n)a)$, $\forall\,a\in\D(M,\theta)$, where
$\{u_n\}$ is an approximate identity for $\M_p(C_0(M_0/\theta))$.
We now construct the Busby invariant $\tau_{\pi}$ associated with (\ref{YY}) as follows.

Let $f_0,\cdots,f_{p-1}\in C(M_\theta)$. Extends them to $g_0,\cdots,g_{p-1}\in C(M)$, respectively, such that
$g_j\vert_{M_\theta}=f_j$, $j=0,\cdots,p-1$. Put $\hat{f}_j=\dfrac{1}{\,p\,}\sum\limits^{p-1}_{k=0}\theta^k(g_j)$,
$j=0,\cdots,p-1$. Then $\theta(\hat{f}_j)=\hat{f}_j$ and $\hat{f}_j\vert_{M_\theta}=f_j$, $j=0,\cdots,p-1$. Thus,
functions $h_0,\cdots,h_{p-1}$ given by $h_j([x])=\hat{f}_j(x)$, $x\in M_0$, $j=0,\cdots,p-1$ are all in $C_b(M_0/\theta)$.
Set $a(f_0,\cdots,f_{p-1})=\sum\limits^{p-1}_{j=0}P_{j+1}\hat{f}_j$. Then $\pi(a(f_0,\cdots,f_{p-1}))=
(f_0,\cdots,f_{p-1})\in\doplus^{p-1}_{j=0}C(M_\theta)$.

Let $\Omega\colon \M_p(C_b(M_0/\theta))\rightarrow \M_p(C_b(M_0/\theta))\slash \M_p(C_0(M_0/\theta))$ be the
canonical homomorphism. Note that for any $b\in \M_p(C_0(M_0/\theta))$ and $[x]\in M_0/\theta$,
\begin{align*}
(\mu(P_{j+1}\hat{f}_j)b)([x])=&\Phi^{-1}(P_{j+1}\hat{f}_j\Phi(b))([x])
=\Gamma^*\circ\phi(P_{j+1}\hat{f}_j\Phi(b))([x])\\
=&\Gamma((P_{j+1}\hat{f}_j\Phi(b))([x])).
\end{align*}
since $P_{j+1}\hat{f}_j\Phi(b)-P_{j+1}\Phi(b)\hat{f}_j(x)\in\Ker\pi_x$, it follows that
$$
\Gamma((P_{j+1}\hat{f}_j\Phi(b))([x]))=\Gamma((P_{j+1}\Phi(b))([x]))\hat{f}_j(x)=(\mu(P_{j+1})b)([x])h_j([x]),
$$
that is, $\mu(P_{j+1}\hat{f}_j)=\mu(P_{j+1})h_j$, $j=0,\cdots,p-1$.
Set $Q_j=\Omega\circ\mu(P_j)$, $j=1,\cdots,p$. Then $\tau_\pi$ is given by
$$
\tau_\pi(f_0,\cdots,f_{p-1})=\Omega\circ\mu(a(f_0,\cdots,f_{p-1}))=
\sum\limits^{p-1}_{j=0}Q_{j+1}\Omega(h_j1_p).
$$

Let $\A(M_\theta)=\{(a_{ij})_{p\times p}\in\M_p(C(M/\theta))\vert\,a_{ij}(x)=0,\,x\in M_\theta,\, i\not=j\}$.
Define the $*$--homomorphism $\Lambda\colon\A(M_\theta)\rightarrow\doplus^{p-1}_{j=0}C(M_\theta)$
by $\Lambda((a_{ij})_{p\times p})=(a_{11}\vert_{M_\theta},\cdots,a_{pp}\vert_{M_\theta})$.
Then we have following exact sequence:
\begin{equation}\label{ZZ}
0\longrightarrow\M_p(C_0(M_0/\theta))\stackrel{i}{\longrightarrow}\A(M_\theta)
\stackrel{\Lambda}{\longrightarrow}\doplus^{p-1}_{j=0}C(M_\theta)\longrightarrow 0.
\end{equation}
The Busby invariant $\tau_\Lambda$ associated with (\ref{ZZ}) is given by
$$
\tau_\Lambda(f_0,\cdots,f_{p-1})=\sum\limits^{p-1}_{j=0}\Omega(e_{j+1})\Omega(h_j1_p),
$$
where $h_0,\cdots,h_{p-1}$ are given as above.

Now we present the main results of the paper as follows
\begin{theorem}\label{PX}
Suppose that the pair $(M,\theta)$ satisfies condition (A) and Condition (B):
$$
M_0=M\backslash M_\theta\ \text{is dense in}\ M,\quad \mathrm{H}^2(\chi(M_0/\theta),\Z)\cong 0.
$$
Then $C(M)\times_\theta\Z_p\cong\A(M_\theta)$.
\end{theorem}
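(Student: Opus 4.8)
The plan is to show that the two short exact sequences \eqref{YY} and \eqref{ZZ} are equivalent as extensions, after conjugating by a suitable unitary in the multiplier algebra $\M_p(C_b(M_0/\theta))$, and then invoke Lemma~\ref{LYY} to conclude $\D(M,\theta)\cong\A(M_\theta)$; since $C(M)\times_\theta\Z_p\cong\D(M,\theta)$, this is exactly the assertion. Both extensions have the same ideal $\B=\M_p(C_0(M_0/\theta))$ (here Condition~(A) together with Lemma~\ref{LY} guarantees that the matrix bundle $E$ is trivial, so Corollary~\ref{CA} applies) and the same quotient $\A=\doplus_{j=0}^{p-1}C(M_\theta)$, and Lemma~\ref{LZ} ensures the ideal is essential in $\D(M,\theta)$ (it is visibly essential in $\A(M_\theta)$). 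So the whole problem reduces to comparing the two Busby invariants
$$
\tau_\pi(f_0,\dots,f_{p-1})=\sum_{j=0}^{p-1}Q_{j+1}\,\Omega(h_j 1_p),\qquad
\tau_\Lambda(f_0,\dots,f_{p-1})=\sum_{j=0}^{p-1}\Omega(e_{j+1})\,\Omega(h_j 1_p),
$$
where $Q_j=\Omega\circ\mu(P_j)$ and $P_j=\Omega_p^* e_j\Omega_p$.

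The key point is to produce a unitary $u\in\M_p(C_b(M_0/\theta))=M(\B)$ with $\Omega(u)Q_j\Omega(u^*)=\Omega(e_j)$ for all $j$. One expects that the constant unitary $\Omega_p\in\M_p(\C)\subset\M_p(C_b(M_0/\theta))$ almost does this: indeed $\Omega_p P_j\Omega_p^*=e_j$, so if $\mu(P_j)$ were literally $P_j 1$ we would be done. The discrepancy $\mu(P_j)-P_j 1_p$ lies in $\B$ (this should follow from the formula $\mu(P_{j+1}\hat f_j)=\mu(P_{j+1})h_j$ together with the fact that $\mu$ restricted to $\B$ is the identity, which pins down $\mu(P_j)$ modulo $\B$), hence $Q_j=\Omega(\mu(P_j))=\Omega(P_j 1_p)$, and conjugation by $\Omega_p$ sends $Q_j$ to $\Omega(e_j)$. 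Thus I would take $u=\Omega_p$; one then checks $\Omega(u)\tau_\pi(f_0,\dots,f_{p-1})\Omega(u^*)=\sum_j\Omega(e_{j+1})\Omega(h_j 1_p)=\tau_\Lambda(f_0,\dots,f_{p-1})$, using that $h_j 1_p$ is a scalar-matrix multiple and so commutes with everything, in particular with $\Omega_p$.

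There is one subtlety I have so far swept under the rug: for Lemma~\ref{LYY} and the identification $M(\B)/\B\cong\M_p(C(\chi(M_0/\theta)))$ to combine correctly, and more importantly for the matrix bundle appearing implicitly over the corona set to be trivial, one needs $\mathrm H^2(\chi(M_0/\theta),\Z)\cong 0$ --- this is precisely the second half of Condition~(B), invoked through Lemma~\ref{LY} applied to $\chi(M_0/\theta)$ (whose covering dimension is again $\le 2$). Concretely, the argument that $\mu(P_j)\equiv P_j 1_p\ (\mathrm{mod}\ \B)$ may require knowing that the extension data genuinely lives in $\M_p$ over the corona and is not twisted; the cohomological hypothesis rules out such a twist. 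I expect this verification --- that $Q_j$ is the image of the constant projection $P_j$, equivalently that $\tau_\pi$ has the displayed diagonalizable-up-to-$\Omega_p$ form --- to be the main obstacle, since it is where the hypotheses on $\chi(M_0/\theta)$ are actually consumed; the rest is the formal extension-theoretic machinery of Lemma~\ref{LYY}. Once that is in place, $\D(M,\theta)\cong\A(M_\theta)$ follows, and hence $C(M)\times_\theta\Z_p\cong\A(M_\theta)$.
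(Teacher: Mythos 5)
Your overall framework coincides with the paper's: realize $C(M)\times_\theta\Z_p$ as $\D(M,\theta)$, compare the Busby invariants $\tau_\pi$ and $\tau_\Lambda$ of the two extensions \eqref{YY} and \eqref{ZZ} over the common ideal $\B=\M_p(C_0(M_0/\theta))$, and conclude via Lemma~\ref{LYY}. The gap is exactly in the step you flag as the main obstacle: the claim that $\mu(P_j)\equiv P_j1_p\ (\mathrm{mod}\ \B)$, so that the constant unitary $u=\Omega_p$ intertwines the two invariants. This is unjustified and in general false. The embedding $\Phi=l\circ\phi^{-1}\circ\Gamma^{-1}$ depends on an arbitrary trivialization $\Gamma$ of the bundle $E$, and under $\Gamma$ the constant projection $P_j\in\D(M,\theta)$ becomes some continuous rank-one-projection-valued function on $M_0/\theta$ that has no reason to agree with the constant $P_j$ up to a $C_0$ perturbation. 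The identity $\mu(P_{j+1}\hat f_j)=\mu(P_{j+1})h_j$ only expresses that central functions act centrally; it does not pin down $\mu(P_j)$ modulo $\B$, and since $P_j\notin\Phi(\B)$, the fact that $\mu\circ\Phi$ is the identity on $\B$ gives no information either.

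The paper fills this gap by a genuinely different mechanism, and this is where the hypotheses are actually consumed. One checks that $\mu(P_j)\M_p(C_b(M_0/\theta))\mu(P_j)$ is commutative (first on the strictly dense ideal, where $P_j\Phi(b)P_j=f_jP_j$), hence $Q_j\M_p(C(\chi(M_0/\theta)))Q_j$ is commutative, hence $\rank Q_j(x)\le1$ by \cite[Lemma 6.1.3]{Pd}; orthogonality and $\sum_jQ_j=1_p$ force $\rank Q_j(x)=1$. The $Q_j$ therefore define complex line bundles over $\chi(M_0/\theta)$, and it is $\dim\chi(M_0/\theta)\le2$ together with $\mathrm H^2(\chi(M_0/\theta),\Z)\cong0$ (Condition~(B)) that trivializes them, yielding a unitary $V$ over the corona with $Q_j=V^*e_jV$. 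There is then a second point your proposal skips entirely: Lemma~\ref{LYY} requires the conjugating unitary to lie in $M(\B)$, and a unitary over the corona need not lift; the paper invokes Lemma~\ref{Lx} (the $\csr(C(X))\le2$ argument) to replace $V$ by a unitary $V_0\in\U_0(\M_p(C(\chi(M_0/\theta))))$, which does lift to $\M_p(C(\beta(M_0/\theta)))$. With $u=\Omega_p$ the lifting is free, but only because you have assumed the conclusion of the hard step. As written, the proposal identifies the correct skeleton but does not prove the theorem.
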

\begin{proof}
Since $\dim(M_0/\theta)=\dim M_0\le \dim M\le 2$
by \cite[Lemma 1.3]{Xue} and $\mathrm H^2(M_0/\theta,\Z)$ $\cong 0$, it follows from Lemma \ref{LY}
that every matrix bundle over $M_0/\theta$ is trivial. Since
$$
\dim(\chi(M_0/\theta))\le\dim(\beta(M_0/\theta))\le\dim(M_0/\theta))\le 2,\quad
\text{H}^2(\chi(M_0/\theta),\Z)\cong 0,
$$
all complex vector bundles over $\chi(M_0/\theta)$ are trivial.

It is easy to check that for any $b\in\M_p(C_0(M_0/\theta))$, there is $f_j\in C_0(M_0)$ with
$\theta(f_j)=f_j$ such that $P_j\Phi(b)P_j=f_jP_j$, $j=1,\cdots,p$. So,
$\mu(P_j)\M_p(C_0(M_0/\theta))\mu(P_j)$ is a commutative $C^*$--algebra, $j=1,\cdots,p$.
Since $\M_p(C_0(M_0/\theta))$ is dense in $\M_p(C_b(M_0/\theta))$ in the sense of strict
topology, $\mu(P_j)\M_p(C_b(M_0/\theta))\mu(P_j)$ is also a commutative $C^*$--algebra,
$j=1,\cdots,p$. Assume that $Q_1,\cdots,Q_p$ in $\M_p(C(\chi(M_0/\theta))$. Then
$Q_j\M_p(C(\chi(M_0/\theta)))\, Q_j$ is commutative and hence $\rank Q_j(x)\le 1$, $\forall\,
x\in\chi(M_0/\theta)$ by \cite[Lemma 6.1.3]{Pd}, $j=1,\cdots,p$.

Note that $Q_1,\cdots,Q_p$ are mutually orthogonal and $\sum\limits^p_{j=1}Q_j=1_p$. Therefore,
we have $\rank Q_j(x)=1,\ \forall\,x\in\chi(M_0/\theta)$, $j=1,\cdots,p$. So there are partial
isometries $V_1,\cdots,V_p$ in $\M_p(C(\chi(M_0/\theta))$ such that $S_j=V^*_jV_j$ and
$e_j=V_jV^*_j$, $j=1,\cdots,p$. Put $V=\sum\limits^p_{j=1}V_j$. Then $V$ is a unitary element
in $\M_p(C(\chi(M_0/\theta))$ and $S_j=V^*e_jV$, $j=1,\cdots,p$. Applying Lemma \ref{Lx} to $V^*$,
we can find $V_0\in\U_0(\M_p(C(\chi(M_0/\theta)))$ and $v\in \U(C(\chi(M_0/\theta)))$ such that
$V^*=V_0\,\diag(1_{p-1},v)$. Consequently, $S_j=V_0e_jV^*_0$, $j=1,\cdots,p$.

Similarly, there is $V'_0\in\U_0(\M_p(C(\chi(M_0/\theta))))$ such that $\Omega(e_j)=V'_0e_jV_0'^*$.
Choose $U\in\U_0(\M_p(C(\beta(M_0/\theta))))$ such that $\Omega(U)=V_0V_0'^*$. Since
$\Omega(h_j1_p)$ commutes with every element in $\M_p(C(\chi(M_0/\theta))), j=0,\cdots,p-1$,
it follows that
$$
\tau_\pi(f_0,\cdots,f_{p-1})=\Omega(U)\tau_\Lambda(f_0,\cdots,f_{p-1})\Omega(U^*),
$$
$\forall\,f_0,\cdots,f_{p-1}\in C(M_\theta)$. Therefore, $C(M)\times_\theta\Z_p\cong\A(M_\theta)$
by Lemma \ref{LYY}.
\end{proof}

For the pair $(M,\theta)$ with $\dim M\le 1$, we have $\dim(M_0/\theta)\le 1$ and
$\dim\chi(M_0/\theta)\le 1$. In this case, $\mathrm H^2(M_0/\theta,\Z)\cong\mathrm
H^2(\chi(M_0/\theta),\Z)\cong 0$. Therefore, we have following corollary according to Theorem
\ref{PX}:
\begin{corollary}
Let $(M,\theta)$ be the pair with $\dim M\le 1$ and $\overline{M_0}=M$, where $\overline{M_0}$
is the closure of $M_0$ in $M$. Then
$C(M)\times_\theta\Z_p\cong\A(M_\theta)$.
\end{corollary}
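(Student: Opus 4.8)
The plan is to obtain this corollary directly from Theorem \ref{PX}, by checking that the two standing hypotheses $\dim M\le 1$ and $\overline{M_0}=M$ force both Condition (A) and Condition (B). Once both conditions are verified, Theorem \ref{PX} yields $C(M)\times_\theta\Z_p\cong\A(M_\theta)$ with no further argument. So the entire task reduces to a dimension-theoretic verification; there is no new $C^*$-algebraic content to produce.

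First I would dispose of Condition (A). The requirement $\dim M\le 2$ is immediate from $\dim M\le 1$. For the cohomological half I would invoke \cite[Lemma 1.3]{Xue} to get $\dim(M_0/\theta)=\dim M_0\le\dim M\le 1$, and then appeal to the standard fact that a normal space $X$ with covering dimension $\dim X\le 1$ satisfies $\mathrm H^k(X,\Z)\cong 0$ for every $k\ge 2$; in particular $\mathrm H^2(M_0/\theta,\Z)\cong 0$. This is exactly the vanishing that Lemma \ref{LY} and the proof of Theorem \ref{PX} rely on, so Condition (A) holds.

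Next I would verify Condition (B). Its density clause, that $M_0=M\backslash M_\theta$ is dense in $M$, is literally the assumption $\overline{M_0}=M$. For the corona clause I would bound the dimension of $\chi(M_0/\theta)$: since Stone--\v{C}ech compactification does not raise the covering dimension of a normal space and passage to the closed subspace $\chi(M_0/\theta)=\beta(M_0/\theta)\backslash(M_0/\theta)$ cannot raise it either, one obtains $\dim\chi(M_0/\theta)\le\dim\beta(M_0/\theta)\le\dim(M_0/\theta)\le 1$. Applying the same dimension-to-cohomology vanishing as above then gives $\mathrm H^2(\chi(M_0/\theta),\Z)\cong 0$, which completes Condition (B). With Conditions (A) and (B) in hand, Theorem \ref{PX} applies verbatim and delivers the asserted isomorphism.

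The only genuinely nontrivial ingredients are external facts rather than fresh arguments: the inequality $\dim(M_0/\theta)\le\dim M$ from \cite[Lemma 1.3]{Xue}, the stability of covering dimension under Stone--\v{C}ech compactification and under restriction to the corona, and the implication ``$\dim X\le 1\Rightarrow\mathrm H^2(X,\Z)\cong 0$.'' The chief point to be careful about is therefore not an obstacle of difficulty but of hygiene: one must confirm that all the spaces in play are separable compact (or locally compact) Hausdorff, hence normal, so that each dimension inequality and each cohomology-vanishing statement is legitimately applicable. After that confirmation the corollary is a pure specialization of Theorem \ref{PX}.
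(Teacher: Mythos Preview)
Your proposal is correct and follows exactly the paper's own route: the paper's justification (given in the paragraph preceding the corollary) simply observes that $\dim M\le 1$ forces $\dim(M_0/\theta)\le 1$ and $\dim\chi(M_0/\theta)\le 1$, whence $\mathrm H^2(M_0/\theta,\Z)\cong\mathrm H^2(\chi(M_0/\theta),\Z)\cong 0$, and then invokes Theorem \ref{PX}. Your write-up is merely a more explicit version of this same argument.
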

\begin{theorem}\label{T}
Suppose that the pair $(M,\theta)$ satisfy Condition (A), Condition (B).
Let $(M',\theta')$ be another pair with $\overline{M'_0}=M'$. Then
$C(M)\times_\theta\Z_p\cong C(M')\times_{\theta'}\Z_p$ iff there exists a homeomorphism
$F\colon M/\theta\rightarrow M'/\theta'$ such that $F(M_\theta)=M'_{\theta'}$.
\end{theorem}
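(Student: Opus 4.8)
The plan is to establish both implications by using Theorem~\ref{PX} to replace the crossed products by the concrete algebras $\A(M_\theta)$ and $\A(M'_{\theta'})$, together with an intrinsic recovery of the pair $(M/\theta,M_\theta)$ from the $C^*$-algebra $C(M)\times_\theta\Z_p\cong\D(M,\theta)$ via its centre and the fibres of $\D(M,\theta)$ over the maximal ideal space of that centre. \emph{Intrinsic description.} I would first check that, whenever $\overline{M_0}=M$, one has $Z(\D(M,\theta))=\{h\cdot 1_p:h\in C(M/\theta)\}\cong C(M/\theta)$: if $z$ is central then $\pi_x(z)$ lies in the centre of $\pi_x(\D(M,\theta))=\M_p(\C)$ for every $x\in M_0$ (the surjectivity here being the content of Lemma~\ref{L1}(1) applied to $\D(M_0,\theta)\subseteq\D(M,\theta)$), so $z$ restricted to $M_0$ is a scalar matrix with all diagonal entries equal; density of $M_0$ and continuity then force $z=f\cdot 1_p$ with $f\in C(M)$, and membership in $\D(M,\theta)$ forces $\theta(f)=f$. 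Hence $\D(M,\theta)$ is a $C(M/\theta)$-algebra with $\widehat{Z(\D(M,\theta))}\cong M/\theta$. For $[y]\in M/\theta$ I would then compute the fibre $\D(M,\theta)_{[y]}:=\D(M,\theta)/\overline{I_{[y]}\D(M,\theta)}$, where $I_{[y]}$ is the maximal ideal of $C(M/\theta)$ at $[y]$, by observing that its irreducible representations are precisely those irreducible representations of $\D(M,\theta)$ whose central character is $[y]$; by Lemma~\ref{L1}, which lists all of these (the $p$-dimensional $\pi_x$ with $P(x)=[y]$ when $[y]\in M_0/\theta$, and the one-dimensional $\sigma_{x_0,0},\dots,\sigma_{x_0,p-1}$ when $[y]=x_0\in M_\theta$), one gets $\D(M,\theta)_{[y]}\cong\M_p(\C)$ for $[y]\in M_0/\theta$ and $\D(M,\theta)_{[y]}\cong\C^p$ for $[y]\in M_\theta$. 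Consequently $M_\theta$ is exactly the set of $[y]\in M/\theta$ for which $\D(M,\theta)_{[y]}$ is commutative, and the analogous statements hold for $(M',\theta')$ using only $\overline{M'_0}=M'$.

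\emph{The implication ``$\Rightarrow$''.} Suppose $\Psi\colon C(M)\times_\theta\Z_p\to C(M')\times_{\theta'}\Z_p$, that is, $\Psi\colon\D(M,\theta)\to\D(M',\theta')$, is a $*$-isomorphism. It carries $Z(\D(M,\theta))$ onto $Z(\D(M',\theta'))$, so by the previous paragraph and Gelfand duality it induces a homeomorphism $F\colon M/\theta\to M'/\theta'$; moreover $\Psi$ sends the closed ideal $\overline{I_{[y]}\D(M,\theta)}$ onto $\overline{I_{F([y])}\D(M',\theta')}$, hence induces $*$-isomorphisms of fibres $\D(M,\theta)_{[y]}\cong\D(M',\theta')_{F([y])}$. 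Since these preserve commutativity, $F(M_\theta)=M'_{\theta'}$ by the characterization above. (This implication uses neither Condition~(A) nor Condition~(B), only the density hypotheses.)

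\emph{The implication ``$\Leftarrow$''.} Conversely, let $F\colon M/\theta\to M'/\theta'$ be a homeomorphism with $F(M_\theta)=M'_{\theta'}$. Restricting $F$ yields homeomorphisms $M_0/\theta\cong M'_0/\theta'$ and hence $\chi(M_0/\theta)\cong\chi(M'_0/\theta')$; combining these with \cite[Lemma~1.3]{Xue} and $\overline{M'_0}=M'$ one checks that $(M',\theta')$ also satisfies Conditions~(A) and (B) (the cohomological conditions transfer along $F$, and the dimension bound follows from $\dim M'_0=\dim(M'_0/\theta')$ together with the decomposition $M'=M'_{\theta'}\cup M'_0$). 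Thus Theorem~\ref{PX} applies to both pairs and gives $C(M)\times_\theta\Z_p\cong\A(M_\theta)$ and $C(M')\times_{\theta'}\Z_p\cong\A(M'_{\theta'})$. Finally, the $*$-isomorphism $g\mapsto g\circ F$ from $C(M'/\theta')$ onto $C(M/\theta)$, applied entrywise, carries $\M_p(C(M'/\theta'))$ onto $\M_p(C(M/\theta))$ and, because $F^{-1}(M'_{\theta'})=M_\theta$, restricts to a $*$-isomorphism of $\A(M'_{\theta'})$ onto $\A(M_\theta)$; composing the three isomorphisms gives $C(M)\times_\theta\Z_p\cong C(M')\times_{\theta'}\Z_p$.

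\emph{Main obstacle.} I expect the delicate point to be the intrinsic description in the first paragraph, specifically pinning down the fibres $\D(M,\theta)_{[y]}$ exactly: one must verify that over a point of $M_0/\theta$ no irreducible representation of dimension $<p$ has that central character, and that over a fixed point the fibre is all of $\C^p$ and not a proper subalgebra. This is precisely where the full force of Lemma~\ref{L1} (the complete list of irreducible representations and of multiplicative pure states) and the density of $M_0$ in $M$ enter; once it is in place, the remaining steps are routine bookkeeping with Theorem~\ref{PX} and Gelfand duality.
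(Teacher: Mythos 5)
Your proposal is correct, and while your ``$\Leftarrow$'' direction coincides with the paper's (extend $F\vert_{M_0/\theta}$ to the Stone--\v{C}ech compactifications to transfer Conditions (A) and (B), apply Theorem~\ref{PX} to both pairs, and conjugate by $g\mapsto g\circ F$ entrywise), your ``$\Rightarrow$'' direction takes a genuinely different route. The paper first uses the multiplicative pure states of Lemma~\ref{L1}(3) to show that the isomorphism $\Delta$ carries $\D(M'_0,\theta')$ onto $\D(M_0,\theta)$, deduces a homeomorphism $M_0/\theta\cong M'_0/\theta'$ from the spectra of these $p$--homogeneous ideals, transfers Conditions (A) and (B) to $(M',\theta')$, and only then invokes Theorem~\ref{PX} to replace both crossed products by $\A(M_\theta)$ and $\A(M'_{\theta'})$, where the centre produces $F$ and a character analysis (characters of $\A(M_\theta)$ live exactly over $M_\theta$) yields $F(M_\theta)=M'_{\theta'}$. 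You instead work directly with $\D(M,\theta)$ as a $C(M/\theta)$--algebra and detect $M_\theta$ as the locus of commutative fibres; this bypasses Theorem~\ref{PX} and the triviality/cohomology hypotheses entirely in that direction, which is a genuine economy, and your character-free criterion (commutative versus noncommutative fibre) is morally the same invariant the paper extracts via characters of $\A(M_\theta)$. The one point you should make explicit is the step you flag as the ``main obstacle'': Lemma~\ref{L1} by itself lists only the irreducible representations of the ideal $\D(M_0,\theta)$ and the one-dimensional representations of $\D(M,\theta)$, so to conclude that every irreducible representation of $\D(M,\theta)$ with central character in $M_0/\theta$ (resp.\ in $M_\theta$) is some $\pi_x$ (resp.\ some $\sigma_{x_0,k}$) you must add the standard dichotomy from the exact sequence (\ref{XX}): an irreducible representation either is nonzero on the essential ideal $\D(M_0,\theta)$, hence restricts to an irreducible representation of it and is an extension of some $\pi_x$ with central character $P(x)\in M_0/\theta$, or it kills the ideal and factors through $\doplus^{p-1}_{j=0}C(M_\theta)$, whose irreducible representations are exactly the $\sigma_{x_0,k}$. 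With that supplement (and the invertibility of the Vandermonde matrix $(\omega^{jk})$ to see that the fibre over a fixed point is all of $\C^p$), your argument is complete.
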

\begin{proof}
($\Leftarrow$) Put $\alpha=F\vert_{M_0/\theta}$. Then $\alpha$ has a unique homeomorphic extension
$\bar\alpha\colon\beta(M_0/\theta)\rightarrow\beta(M'_0/{\theta'})$
(cf. \cite[\S 44 Corollary 10]{H}). Thus $\bar\alpha\colon\chi(M_0/\theta)\rightarrow
\chi(M'_0/\theta')$ is a homeomorphism. Thus $(M',\theta')$ Satisfies Condition (A) and
Condition (B). Consequently, $C(M)\times_\theta\Z_p\cong\A(M_\theta)$ and
$C(M')\times_{\theta'}\Z_p\cong\A(M'_{\theta'})$ by Theorem \ref{PX}. Clearly,
$\Psi((a_{ij})_{p\times p})=(a_{ij}\circ F)_{p\times p}$ gives a $*$--isomorphism from
$\A(M'_{\theta'})$ onto $\A(M_\theta)$. The assertion follows.

($\Rightarrow$) Let $\Delta$ be the $*$--isomorphism from $\D(M',\theta')$ onto
$\D(M,\theta)$. Let $a'\in\D(M'_0,\theta')$ and put $a=\Delta(a')$. If $a\not\in\D(M_0,\theta)$,
we can pick $y_0\in M_{\theta}$ such that $a(y_0)\not=0$. Then $\sigma_{y_0,1}\circ\Delta$ is
multiplicable on $\D(M',\theta')$. Thus, there exist $x_0 \in M'_{\theta'}$ and
$k\in\{1,\cdots,p\}$ such that $\sigma_{y_0,1}\circ\Delta=\sigma_{x_0,k}$ and hence
$\sigma_{y_0,1}(a)=\sigma_{x_0,k}(a')=0$, a contradiction. So, $\Delta$ induces a
$*$--isomorphism $\Delta_0\colon\D(M'_0,\theta')\rightarrow\D(M_0,\theta)$ and so that
$\Delta_0'=\Phi^{-1}\circ\Delta_0\circ\Phi'^{-1}$ gives a $*$--isomorphism of
$\M_p(C_0(M'_0/\theta'))$ onto $\M_p(C_0(M_0/\theta))$. Thus, we can find a homeomorphism
$\alpha\colon M_0/\theta\rightarrow M'_0/\theta'$. This shows that $(M',\theta')$ satisfies
Condition (A) and Condition (B) too.

Now we have $\A(M'_{\theta'})\cong\A(M_\theta)$ via the $*$--isomorphism $\Theta$ by Theorem
\ref{PX}. Since $\Theta(f1_p)$ commutes with every element in $\A(M_{\theta})$, $\forall\, f\in
C(M'/\theta')$, it follows that
there is $h\in C(M/\theta)$ such that $\Theta(f1_p)=h1_p$. Thus, $f\mapsto h$ yields a $*$--isomorphism
from $C(M'/\theta')$ onto $C(M/\theta)$ so that there is a homeomorphism $F\colon M/\theta\rightarrow
M'/\theta'$ such that $\Theta(f1_p)=(f\circ F)1_p=h1_p$.

Let $\phi$ be a character on $\A(M_\theta)$ with $\phi(1_p)=1$. Since $e_1,\cdots,e_p
\in\A(M_\theta)$ and $\sum\limits^p_{j=1}e_j=1_p$, there is $e_{i_0}$ such that $\phi(e_{i_0})=1$
and $\phi(e_j)=0$, $j\not=i_0$. Without losing the generality, we may assume $i_0=1$. Then
$f\mapsto\phi(fe_1)$ is a character on $C(M/\theta)$. Thus, there is $x_0\in M/\theta$ such that
$\phi(fe_1)=f(x_0)$, $\forall\,f\in C(M/\theta)$. For any $g\in C_0(M_0/\theta)$, let
$B=(b_{ij})_{p\times p}\in\A(M_\theta)$ be given by $b_{p1}=g$ and $b_{ij}=0$, $i\not=p$,
$j\not=1$. Then $B^*B=g^*g\,e_1$, $e_pB=B$ and hence $|g(x_0)|^2=0$,
$\forall\,g\in C_0(M_0/\theta)$. Consequently, $x_0\in M_\theta$.

For any $x\in M_\theta$, define the character $\phi_x$ on $\A(M_\theta)$ by
$\phi_x((a_{ij})_{p\times p})=a_{11}(x)$. Note that $\phi_x\circ\Theta^{-1}$ is a character on
$\A(M'_{\theta'})$. So by above arguments, there is $y\in M'_{\theta'}$ such that
$$
g(y)=\phi_x\circ\Theta(g1_p)=\phi_x((g\circ F)1_p)=g(F(x)),\quad\forall\,g\in C(M'/\theta').
$$
This means that $F(M_\theta)\subset M'_{\theta'}$. Similarly, we have $F^{-1}(M'_{\theta'})
\subset M_\theta$. Thus, $F(M_\theta)=M'_{\theta'}$.
\end{proof}
\begin{corollary}
Suppose that the pair $(M,\theta)$ satisfy Condition (A) and Condition (B).
Let $(M',\theta')$ be another pair. If $(M,\theta)$ and $(M',\theta')$ are orbit equivalent,
that is, there is a homeomorphism $F\colon M\rightarrow M'$ such that $F(O_\theta(x))=O_{\theta'}(F(x)),\forall\,
x\in M$, then $C(M)\times_\theta\Z_p\cong C(M')\times_{\theta'}\Z_p$.
\end{corollary}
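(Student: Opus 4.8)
The plan is to obtain this as an immediate consequence of Theorem~\ref{T}. All that has to be done is to convert the orbit equivalence $F\colon M\to M'$ into a homeomorphism $\bar F\colon M/\theta\to M'/\theta'$ with $\bar F(M_\theta)=M'_{\theta'}$, and to verify that $(M',\theta')$ sits in the class of pairs for which Theorem~\ref{T} is formulated.

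First I would push $F$ down to the orbit spaces. Since $F(O_\theta(x))=O_{\theta'}(F(x))$ for every $x\in M$, the map $P'\circ F\colon M\to M'/\theta'$ is constant on each $\theta$--orbit, so it factors uniquely as $P'\circ F=\bar F\circ P$ for a map $\bar F\colon M/\theta\to M'/\theta'$, and $\bar F$ is continuous by the universal property of the quotient topology. The same identity $F(O_\theta(x))=O_{\theta'}(F(x))$, together with the bijectivity of $F$, shows $\bar F$ is a bijection; as $M/\theta$ is compact and $M'/\theta'$ is Hausdorff, $\bar F$ is a homeomorphism (equivalently, applying the construction to the orbit equivalence $F^{-1}$ produces a continuous inverse).

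Next I would check that $\bar F$ matches up the fixed--point sets. By \cite{Xue} one has $\{x\in M\mid\theta^k(x)=x\}=M_\theta$ for $k=1,\dots,p-1$, so $O_\theta(x)$ is a singleton when $x\in M_\theta$ and has exactly $p$ elements when $x\in M_0$; the same holds for $\theta'$. Since $F$ carries $O_\theta(x)$ bijectively onto $O_{\theta'}(F(x))$, these orbits have the same cardinality, whence $x\in M_\theta\iff F(x)\in M'_{\theta'}$. Thus $F(M_\theta)=M'_{\theta'}$, and passing to quotients $\bar F(M_\theta)=M'_{\theta'}$ under the usual identification of $M_\theta$ (resp. $M'_{\theta'}$) with its injective image in $M/\theta$ (resp. $M'/\theta'$). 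Consequently $F(M_0)=M'_0$, so $\overline{M'_0}=\overline{F(M_0)}=F(\overline{M_0})=F(M)=M'$ by Condition~(B), while $M'_{\theta'}=F(M_\theta)\neq\varnothing$ and $\dim M'=\dim M\le 2$ because $F\colon M\to M'$ is a homeomorphism. Hence $(M',\theta')$ is a pair with $\overline{M'_0}=M'$, and Theorem~\ref{T}, applied to the homeomorphism $\bar F$, gives $C(M)\times_\theta\Z_p\cong C(M')\times_{\theta'}\Z_p$.

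Once Theorem~\ref{T} is in hand the argument is bookkeeping, and I do not expect a genuine obstacle; the only step that calls for a little care is the passage from ``$F$ preserves orbits'' to ``$F$ preserves fixed points'', which relies on the fact (from \cite{Xue}) that a point of $M_0$ is $\theta^k$--periodic for no $k$ with $1\le k\le p-1$, so that orbit cardinality detects membership in $M_\theta$.
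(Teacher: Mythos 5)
Your proposal is correct and follows essentially the same route as the paper: push $F$ down to a homeomorphism $\tilde F\colon M/\theta\to M'/\theta'$, observe that it carries $M_\theta$ onto $M'_{\theta'}$ and that $\overline{M'_0}=F(\overline{M_0})=M'$, and then invoke Theorem~\ref{T}. The paper states these verifications as ``obvious,'' whereas you spell out the orbit-cardinality argument (via the fact from \cite{Xue} that nontrivial orbits have exactly $p$ points) that shows $F$ preserves the fixed-point sets; this is a harmless and welcome elaboration, not a different method.
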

\begin{proof}
$F$ induces a homeomorphism $\tilde F$ of $M/\theta$ onto $M'/\theta'$ given by $\tilde F(P(x))$ $=P'(F(x))$
by the assumption, where $P'\colon M'\rightarrow M'/\theta'$ is the canonical projective map.
Obviously, $\tilde F(M_\theta)=M'_{\theta'}$ and $\overline{M'_0}=F(\overline{M_0})=M'$.
So $C(M)\times_\theta\Z_p\cong C(M')\times_{\theta'}\Z_p$ by Theorem \ref{T}.
\end{proof}

\section{Some examples}

\begin{example}
{\rm
Consider $(M,\theta)$, where $M=\mathbf S^1$ and $\theta(z)=\bar z$, $\forall\,z\in\mathbf S^1$.
Then
$$
M_\theta=\{-1,1\},\ \overline{M_0}=M,\ M_0/\theta\cong (-1,1),\ M/\theta\cong[-1,1].
$$
$(M,\theta)$ satisfies Condition (A) and (B) for $\dim M=1$. Thus $C(M)\times_\theta
\mathbb Z_p\cong\A(M_\theta)$ by Theorem \ref{PX}.

Define $\gamma(<z>)=\dfrac{x+1}{2}$ for $z=x+i\,y\in\mathbf S^1$, where $<z>=P(z)\in M/\theta$.
Clearly, $\gamma$ is a homeomorphism from $M/\theta$ onto $[0,1]$ and $\gamma(M_\theta)=\{0,1\}$.
So
\begin{align*}
C(M)\times_\theta\mathbb Z_2\cong&\A(\{0,1\})\\
   =&\{f\colon [0,1]\rightarrow\M_2(\C)\ \text{continuous}
\vert\ f(0),\, f(1)\ \text{are diagonal}\}.
\end{align*}
}
\end{example}

\begin{example}
{\rm
Let $M={\mathbf S}^1\Times{\mathbf S}^1$ and $\theta(z_1,z_2)=(z_2,z_1)$, $\forall\,
z_1,z_2\in{\mathbf S}^1$. Then $M_\theta=\{(z,z)\vert\,z\in\mathbf S^1\}\cong\mathbf S^1$ and
$\overline{M_0}=M$. Set
$$
S=\{(z_1+z_2,z_1z_2)\vert\,z_1,\,z_2\in\mathbf S^1\}\subset\C\Times\C.
$$
It is easy to check that $S$ is a closed and bounded subset in $\C\Times\C$, that is, $S$
is compact. Define the continuous map $\xi\colon M/\theta\rightarrow S$ and
$\beta\colon [0,1]\Times\mathbf S^1\rightarrow S$, respectively, by
$\xi(<\!z_1,z_2\!>)=(z_1+z_2,z_1z_2)$ and $\beta(t,z)=(2zt,z^2)$,
where $<\!z_1,z_2\!>=P(z_1,z_2)\in M/\theta$. Then $\xi$ and $\beta$ are all homeomorphic
(cf. \cite[Exmple 4.3]{Xue}). Therefore the homeomorphism $\delta=\beta^{-1}\circ\xi\colon
M/\theta\rightarrow [0,1]\Times\mathbf S^1$ sends $M_\theta$ to $\{1\}\Times\mathbf S^1$.

\noindent{\bf Claim 1.} $(M,\theta)$ satisfies Condition (A) and Condition (B).

Since $\delta(M_\theta)=\{1\}\Times\mathbf S^1$, $\delta(M_0/\theta)=[0,1)\Times
\mathbf S^1$. Let $i\colon\{1\}\Times\mathbf S^1\rightarrow[0,1]\Times\mathbf S^1$ be the
inclusion. Then $i$ is homotopic equivalence map. Since $\mathrm H^2([0,1]\Times\mathbf S^1)
\cong 0$, it follows from the exact sequence of the reduced coholomological groups (cf. \cite{Ta}) that
$$
\tilde{\mathrm H}^1([0,1]\Times\mathbf S^1,\Z)\stackrel{i^*}{\longrightarrow}
{\mathrm H}^1(\{1\}\Times\mathbf S^1,\Z)\longrightarrow
\tilde{\mathrm H}^2(([0,1)\Times\mathbf S^1)^+,\Z)\longrightarrow 0.
$$
So $\tilde{\mathrm H}^2(([0,1)\Times\mathbf S^1)^+,\Z)\cong 0$ and consequently, $(M,\theta)$ satisfies Condition (A).

Noting that $[0,1)\Times\mathbf S^1\cong [0,+\infty)\Times\mathbf S^1$, we have
$\mathrm H^2(\chi([0,1)\Times\mathbf S^1),\Z)\cong 0$ by \cite[Corollary 4.7]{GP}.
So $(M,\theta)$ satisfies Condition (B).

Let $M'=[0,2]\Times\mathbf S^1$ and $\theta'(t,z)=(2-t,z)$. Then $M'_{\theta'}=\{1\}\Times
\mathbf S^1$. Define
the homeomorphism $\delta'\colon M'/\theta'\rightarrow [0,1]\Times\mathbf S^1$ by
$\delta'(<t,z>)=(1-|1-t|,z)$.

\noindent{\bf Claim 2.} $C(M)\times_\theta\Z_2\cong C(\mathbf S^1)\otimes\A(1)$, where
$$
\A(1)=\bigg\{\begin{pmatrix}f_{11}&f_{12}\\ f_{21}&f_{22}\end{pmatrix}\in
\M_2(C([0,1]))\bigg\vert\,f_{12}(1)=f_{21}(1)=0\bigg\}.
$$

Since $M/\theta\cong [0,2]\Times\mathbf S^1/\theta'$ via $\delta'^{-1}\circ\delta$ and
$(\delta'^{-1}\circ\delta)(M_\theta)=M'_{\theta'}$, it follows from Theorem \ref{T} that
$$
C(M)\times_\theta\Z_2\cong C(M')\times_{\theta'}\Z_2\cong C(\mathbf S^1)
\otimes(C([0,2])\times_{\theta_1}\Z_2),
$$
where $\theta_1\colon [0,2]\rightarrow [0,2]$ given by $\theta_1(t)=2-t$.

Note that $[0,2]/\theta_1\cong [0,1]$ via $<t>\mapsto |1-t|$, $\forall\,t\in [0,1]$. We have
$C([0,2])\times_{\theta_1}\Z_2\cong\A(1)$ by Theorem \ref{PX}.
}
\end{example}

\bigskip

\noindent Department of Mathematics\\
East China Normal University\\
 Shanghai 200241, P.R. China

\noindent {\it E-mail}: {\tt yfxue@math.ecnu.edu.cn
\end{document}